\theoremstyle{plain}
\newtheorem{theorem}{Theorem}
\newtheorem{prop}[theorem]{Proposition}
\theoremstyle{remark}
\newtheorem*{remark}{Remark}
\theoremstyle{definition}
\begin{document}

\title[On characterization of integrable
sesquilinear forms]{On characterization of integrable sesquilinear
forms}

\author[A.N. Sherstnev --- O.E. Tikhonov]{A.N. Sherstnev* --- O.E. Tikhonov**}

\newcommand{\acr}{\newline\indent}

\address{\llap{*} Department of Mathematical Analysis  \acr
Faculty of Mechanics and Mathematics \acr Kazan Federal University
\acr Kremlyovskaya St., 18 \acr Kazan 420008 \acr RUSSIAN
FEDERATION}

\email{anatolij.sherstnev@gmail.com}

\address{\llap{**}
Department of Mathematical Statistics  \acr Faculty of Computer
Science and Cybernetics \acr Kazan Federal University \acr
Kremlyovskaya St., 18 \acr Kazan 420008 \acr RUSSIAN FEDERATION}

\email{Oleg.Tikhonov@ksu.ru}

\thanks{The work
of the second author was supported by the Ministry of Education and
Science of the Russian Federation (government contract No.
02.740.11.0193)}

\subjclass{Primary 46L51, 46L52}

\keywords{Von Neumann algebra, normal state, weight, integrable
sesquilinear form}

\begin{abstract}
We give necessary and sufficient condition for a sesquilinear form
to be integrable with respect to a faithful normal state on a von
Neumann algebra.
\end{abstract}

\maketitle

The fundamental solution to the problem of constructing a
noncommutative $L_1 (\varphi )$-space associated with a faithful
normal semifinite weight $\varphi$ on a von Neumann algebra
$\mathcal{M}$ was obtained in 1972--78. This space was realized as
a space of ``integrable'' sesquilinear forms defined on a ``lineal
of weight'' and ``affiliated'' with $\mathcal{M}$. In the next years
this approach was thoroughly developed (see the survey \cite{TruShe}
and the monograph \cite{She1}). For the other approaches to the
integration with respect to weights and states we refer the reader
to the surveys \cite{TruShe}, \cite{PX} and the recent paper
\cite{HJX}.

It is well known that a bounded linear operator on a Hilbert space
is nuclear if and only if it has finite matrix trace (see for
instance \cite[Theorem III.8.1]{GK}). In the present paper we
examine a problem whether certain analogue of that assertion holds
for integrable sesquilinear forms.

In what follows, $H$ is a Hilbert space with the scalar product
denoted by $\langle \cdot , \cdot \rangle$.

Let $\varphi$ be a faithful normal semifinite weight on a von
Neumann algebra $\mathcal{M}$ of operators on $H$ (see, e.\,g.,
\cite{Ta2}),
$
\mathfrak{m}^+_{\varphi} =
\{ x \in \mathcal{M}^+ \colon \varphi (x) < +\infty \}$,
\,
$\mathfrak{m}_{\varphi}^{\text{sa}} = \mathfrak{m}^+_{\varphi} -
\mathfrak{m}^+_{\varphi} .
$
It is well known that the formula
$$
\|x\|_{\varphi} \equiv \inf \{\varphi (x_1+x_2) \colon x=x_1-x_2; \
x_1 , x_2 \in \mathfrak{m}^+_{\varphi} \}
$$
determines a norm $\| \cdot \|_{\varphi}$ on
$\mathfrak{m}_{\varphi}^{\text{sa}}$. By
$L_1(\varphi)^{\text{sa}}$ we will denote the corresponding completion of
$\mathfrak{m}_{\varphi}^{\text{sa}}$.

The linear subspace of $H$
$$
D_{\varphi} \equiv \{ f\in H \colon \exists \lambda >0 \ \forall x
\in \mathcal{M}^+ \ ( \langle xf ,f \rangle \leqslant \varphi(x))\}
$$
was introduced and called \emph{the lineal of weight} in
\cite{She2}. Clearly, if $\varphi$ is represented in the form
$$
\varphi = \sum_{i\in I} \langle \cdot f_i , f_i \rangle, \quad f_i
\in H,  \eqno(1)
$$
then $f_i \in D_{\varphi}$ ($i\in I$).

The real Banach space $L_1(\varphi)^{\text{sa}}$ can be realized by
hermitian sesquilinear forms defined on $D_{\varphi}$. Namely, if
$\widetilde{x} \in L_1(\varphi)^{\text{sa}}$ and $(x_n)$ is a Cauchy
sequence in the normed space
$(\mathfrak{m}_{\varphi}^{\text{sa}} , \| \cdot \|_{\varphi})$, which
determines the element $\widetilde{x}$ of the completion, then the
formula
$$
a_{\widetilde{x}}(f,g)=\lim_n \langle x_n f , g \rangle,\qquad
f,g\in D_{\varphi} ,
$$
correctly defines a hermitian sesquilinear form $a_{\widetilde{x}}$.
The sequence $(x_n)$ is called \emph{defining for}
$a_{\widetilde{x}}$.
Also, since
$|\varphi (x)| \le \|x\|_{\varphi}$ for any
$x \in \mathfrak{m}_{\varphi}^{\text{sa}}$,
the formula
$$
\varphi (a_{\widetilde{x}} ) = \lim_n \varphi (x_n )
$$
correctly defines the value $\varphi (a_{\widetilde{x}} )$ which is
called \emph{the integral} (or \emph{the expectation}) of the
sesquilinear form $a_{\widetilde{x}}$ with respect to $\varphi$.
Accordingly, such sesquilinear forms are called \emph{integrable}.
Moreover, the main result of \cite{She2} (Theorem 2) says that the
map $\widetilde{x} \mapsto a_{\widetilde{x}}$
($\widetilde{x} \in L_1(\varphi)^{\text{sa}}$) is injective (see
also \cite[Theorem 16.7]{She1}, \cite[Theorem 1]{TruShe}). Thus,
$L_1(\varphi)^{\text{sa}}$ is meaningfully described as a real
Banach space of integrable sesquilinear forms. The cone
$L_1(\varphi)^+$ of integrable positive sesquilinear forms induces
a natural order structure in $L_1(\varphi)^{\text{sa}}$. The space
$L_1(\varphi)$ is defined as a certain complexification of
$L_1(\varphi)^{\text{sa}}$ \cite[16.11]{She1}, \cite[1.5]{TruShe},
and the notion of the integral is extended to sesquilinear forms in
$L_1(\varphi)$. The following proposition gives an ``explicit'' form
of such integral.

\begin{prop}[{\cite[Proposition 17.11]{She1}}]
Let
$$
\varphi = \sum_{i\in I} \langle \cdot f_i , f_i \rangle , \quad f_i
\in H,  \eqno(1)
$$
be a faithful normal semifinite weight on a von Neumann algebra
$\mathcal{M}$ and $a\in L_1(\varphi)$. Then
$$
\varphi(a)=\sum_{i \in I} a(f_i,f_i),  \eqno(2)
$$
where the series in $(2)$ converges absolutely and its sum does not
depend on the choice of representation of $\varphi$ in the form
$(1)$.
\end{prop}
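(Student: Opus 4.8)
The plan is to prove $(2)$ by descending to the ``algebraic'' level $\mathfrak{m}_{\varphi}^{\text{sa}}$ and then climbing back through the completion. By $\mathbb{C}$-linearity of the integral it suffices to treat $a\in L_1(\varphi)^{\text{sa}}$, so let $\widetilde{x}\in L_1(\varphi)^{\text{sa}}$ have a defining sequence $(x_n)\subset\mathfrak{m}_{\varphi}^{\text{sa}}$; recall that $(x_n)$ is $\|\cdot\|_{\varphi}$-Cauchy, $a_{\widetilde{x}}(f,g)=\lim_n\langle x_n f,g\rangle$, and $\varphi(a_{\widetilde{x}})=\lim_n\varphi(x_n)$. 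The first step is an elementary estimate at the level of $\mathfrak{m}_{\varphi}^{\text{sa}}$: for $x\in\mathfrak{m}_{\varphi}^{\text{sa}}$ and any decomposition $x=x_1-x_2$ with $x_1,x_2\in\mathfrak{m}_{\varphi}^+$, the series $\sum_i\langle x_1 f_i,f_i\rangle$ and $\sum_i\langle x_2 f_i,f_i\rangle$ are convergent series of nonnegative numbers with sums $\varphi(x_1)$ and $\varphi(x_2)$, this being precisely the meaning of the representation $(1)$ on positive elements. Subtracting them termwise (legitimate for summable families) gives that $\sum_i\langle x f_i,f_i\rangle$ is summable with sum $\varphi(x)$, and from $|\langle x f_i,f_i\rangle|\leqslant\langle x_1 f_i,f_i\rangle+\langle x_2 f_i,f_i\rangle$ we obtain $\sum_i|\langle x f_i,f_i\rangle|\leqslant\varphi(x_1)+\varphi(x_2)=\varphi(x_1+x_2)$; taking the infimum over all such decompositions yields the crucial bound $\sum_i|\langle x f_i,f_i\rangle|\leqslant\|x\|_{\varphi}$.

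The second step uses this bound to pass to the completion. Put $g_n:=(\langle x_n f_i,f_i\rangle)_{i\in I}$; applying the bound to $x_n-x_m$ gives $\|g_n-g_m\|_{\ell^1(I)}\leqslant\|x_n-x_m\|_{\varphi}\to 0$, so $(g_n)$ is Cauchy, hence convergent, in $\ell^1(I)$. Its $\ell^1(I)$-limit agrees coordinatewise with the pointwise limit of the $g_n$, which is the family $(a_{\widetilde{x}}(f_i,f_i))_{i\in I}$ because each $f_i\in D_{\varphi}$ and hence $\langle x_n f_i,f_i\rangle\to a_{\widetilde{x}}(f_i,f_i)$. Therefore $(a_{\widetilde{x}}(f_i,f_i))_{i\in I}\in\ell^1(I)$, i.e. the series $\sum_i a_{\widetilde{x}}(f_i,f_i)$ converges absolutely, and since $\ell^1(I)$-convergence of a sequence entails convergence of the corresponding sums, $\sum_i a_{\widetilde{x}}(f_i,f_i)=\lim_n\sum_i\langle x_n f_i,f_i\rangle=\lim_n\varphi(x_n)=\varphi(a_{\widetilde{x}})$.

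Finally, for general $a\in L_1(\varphi)$ write $a=a_1+ia_2$ with $a_j\in L_1(\varphi)^{\text{sa}}$; absolute convergence of $\sum_i a(f_i,f_i)$ follows from the two self-adjoint cases by the triangle inequality, and $\varphi(a)=\sum_i a(f_i,f_i)$ follows from $\mathbb{C}$-linearity of $a\mapsto\varphi(a)$ and of the evaluations $a\mapsto a(f_i,f_i)$. Independence of the sum on the choice of representation $(1)$ is then automatic, since $\varphi(a)$ is defined without reference to any representation. I expect the only genuine point of the argument to be the interchange of $\lim_n$ and $\sum_i$ in the second step; the estimate $\sum_i|\langle x f_i,f_i\rangle|\leqslant\|x\|_{\varphi}$ is exactly what turns that interchange into a routine appeal to completeness of $\ell^1(I)$, and everything else is bookkeeping with the definitions recalled in the text and with summable families of nonnegative numbers.
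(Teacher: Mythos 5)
Your argument is correct. Note that the paper itself gives no proof of this proposition --- it is quoted from \cite[Proposition 17.11]{She1} --- so there is no in-paper argument to compare against; but your proof is self-contained and sound. The one genuinely load-bearing step is exactly the one you identify: the estimate $\sum_{i}|\langle x f_i , f_i \rangle| \leqslant \varphi(x_1+x_2)$ for every decomposition $x=x_1-x_2$ with $x_1,x_2\in\mathfrak{m}^+_{\varphi}$, hence $\sum_{i}|\langle x f_i , f_i \rangle| \leqslant \|x\|_{\varphi}$ after taking the infimum; this makes $x\mapsto(\langle xf_i,f_i\rangle)_{i\in I}$ a contraction from $(\mathfrak{m}_{\varphi}^{\text{sa}},\|\cdot\|_{\varphi})$ into $\ell^1(I)$, and completeness of $\ell^1(I)$ then legitimizes the interchange of $\lim_n$ and $\sum_i$, with coordinatewise convergence matching $a_{\widetilde{x}}(f_i,f_i)$ because each $f_i\in D_{\varphi}$. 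The reduction to the hermitian case and the observation that independence of the representation is automatic (since $\varphi(a)$ is defined representation-free) are both fine.
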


In \cite[page 166]{She1}, the following problem was posed: does the
converse to Proposition 1 hold? The theorem below gives an
affirmative answer to the question in the special case of normal
states.

\begin{theorem}
Let $\varphi$ be a faithful normal state on a von Neumann algebra
$\mathcal{M}$. For a sesquilinear form \, $a$ \, defined on $D_{\varphi}$,
the following conditions are equivalent:

\smallskip
$\mathrm{(i)}$
$a \in L_1(\varphi)$,

\smallskip
$\mathrm{(ii)}$
for any representation
$\varphi = \sum\limits_{i\in I} \langle \cdot f_i , f_i \rangle$,
the series $\sum\limits_{i \in I} a(f_i,f_i)$ converges absolutely
and the sum does not depend on the representation of $\varphi$.
\end{theorem}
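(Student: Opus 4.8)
The implication $\mathrm{(i)}\Rightarrow\mathrm{(ii)}$ is exactly Proposition 1, so it remains to prove $\mathrm{(ii)}\Rightarrow\mathrm{(i)}$, and the plan is to reproduce in this setting the reasoning behind the matrix-trace criterion for nuclearity recalled in the Introduction. I would first record the routine reductions. Writing $a=a_1+ia_2$ with $a_1(f,g)=\tfrac12\bigl(a(f,g)+\overline{a(g,f)}\bigr)$ and $a_2(f,g)=\tfrac1{2i}\bigl(a(f,g)-\overline{a(g,f)}\bigr)$, each $a_j$ is hermitian and again satisfies $\mathrm{(ii)}$, since absolute convergence of $\sum_i a(f_i,f_i)$ and representation-independence of its sum are inherited by the complex conjugates of the terms and hence by $a_1,a_2$; as $L_1(\varphi)$ is the complexification of $L_1(\varphi)^{\text{sa}}$, it suffices to treat hermitian $a$ and show $a\in L_1(\varphi)^{\text{sa}}$. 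Note also that, $\varphi$ being a state, $\mathfrak{m}^{\text{sa}}_{\varphi}=\mathcal{M}^{\text{sa}}$.

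\smallskip

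The heart of the matter is to convert $\mathrm{(ii)}$ into the hypothesis of the classical criterion by exploiting the abundance of representations of $\varphi$. Two perturbations of a representation $\varphi=\sum_i\langle\cdot\, f_i,f_i\rangle$ leave $\varphi$ unchanged: replacing the vectors $f_i$ by $u_i'f_i$ with $u_i'\in\mathcal{M}'$ unitary — since $\langle x u_i'f_i,u_i'f_i\rangle=\langle xf_i,f_i\rangle$ for $x\in\mathcal{M}$ — and replacing $(f_i)$ by a unitary mixture $\bigl(\sum_j u_{ij}f_j\bigr)_i$. Applying $\mathrm{(ii)}$ to perturbations of the first kind, one vector at a time, and observing that every $f\in D_{\varphi}$ is, up to a positive scalar, the first vector of some representation of $\varphi$ (because $\omega_f\le\lambda\varphi$ means $\varphi-\lambda^{-1}\omega_f$ is again a representable positive normal functional), I would obtain the affiliation relation $a(x'f,g)=a(f,x'^{*}g)$ for all $x'\in\mathcal{M}'$ and $f,g\in D_{\varphi}$. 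Having this, I would pass to the standard form of $(\mathcal{M},\varphi)$ — legitimate because $L_1(\varphi)^{\text{sa}}$ is representation-independent and the $\mathcal{M}'$-affiliation of $a$ lets one transfer it — so that $\varphi=\langle\cdot\,\xi,\xi\rangle$ with $\xi$ cyclic and separating and $D_{\varphi}=\mathcal{M}'\xi$. Now choose a representation whose vectors $(f_i)_{i\in I}$ span a dense subspace and form, via $W\colon\delta_i\mapsto f_i$, the bounded operator $W\colon\ell^2(I)\to H$; orthonormal bases $(e_k)$ of $\ell^2(I)$ then correspond to representations $\varphi=\sum_k\langle\cdot\, We_k,We_k\rangle$, so $\mathrm{(ii)}$ says exactly that the sesquilinear form $b$ with $b(\delta_i,\delta_j)=a(f_i,f_j)$ has an absolutely convergent, basis-independent matrix trace $\sum_k b(e_k,e_k)$ in every orthonormal basis of $\ell^2(I)$. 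A standard uniform-boundedness argument shows $b$ is bounded, $b(\cdot,\cdot)=\langle T\,\cdot\,,\cdot\rangle$ for a bounded self-adjoint $T$, and then \cite[Theorem III.8.1]{GK} shows $T$ is nuclear, with $\operatorname{Tr}T$ equal to the common value of the series in $\mathrm{(ii)}$.

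\smallskip

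It then remains to recover an element of $L_1(\varphi)^{\text{sa}}$ from $T$. Writing $T=\sum_k\lambda_k\langle\cdot\,,v_k\rangle v_k$ with $\sum_k|\lambda_k|<\infty$ and truncating, one seeks operators $x_n\in\mathcal{M}^{\text{sa}}$ with $\langle x_nf,g\rangle\to a(f,g)$ for all $f,g\in D_{\varphi}$ and $(x_n)$ Cauchy in $\|\cdot\|_{\varphi}$; then $a=a_{\widetilde x}$ for the limit $\widetilde x\in L_1(\varphi)^{\text{sa}}$, which is $\mathrm{(i)}$. I expect this to be the main obstacle. The formal expression for $x_n$ involves an unbounded ``inverse density operator'' of $\varphi$, so the truncations have to be chosen adapted to a density operator of $\varphi$ in order to keep the $x_n$ bounded; the affiliation relation must be used to keep them inside $\mathcal{M}$ rather than merely in $B(H)$; and one must estimate $\|x_n-x_m\|_{\varphi}$ by the trace norm of the corresponding truncations of $T$. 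It is precisely here that the hypothesis that $\varphi$ is a finite state — so that $\mathfrak{m}_{\varphi}=\mathcal{M}$ and the density operators in play are trace class — enters essentially, which is why the general-weight case is not reached by the same argument.
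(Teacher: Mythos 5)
Your reduction to the hermitian case and the derivation of the affiliation relation $a(x'f,g)=a(f,x'^{*}g)$ from one-vector perturbations are sound, and the route you choose (transporting everything to $\ell^2(I)$ via $W\colon\delta_i\mapsto f_i$ and invoking the Gohberg--Krein matrix-trace criterion) is genuinely different from the paper's. But the proof is not complete, and the gap sits exactly where you say you ``expect the main obstacle'': having produced a nuclear self-adjoint $T$ on $\ell^2(I)$ with $a(We,Wf)=\langle Te,f\rangle$, you never actually construct the operators $x_n\in\mathcal{M}^{\text{sa}}$, never show they lie in $\mathcal{M}$ rather than $B(H)$, and never prove the estimate $\|x_n-x_m\|_{\varphi}\lesssim\|T_n-T_m\|_{1}$ that would make $(x_n)$ Cauchy in $(\mathfrak{m}^{\text{sa}}_{\varphi},\|\cdot\|_{\varphi})$. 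Formally $x_n$ is a conjugation of a truncation of $T$ by the unbounded inverse of $W$, and controlling this is the entire analytic content of the implication; describing the difficulty is not the same as resolving it. As it stands, condition $\mathrm{(i)}$ is not reached.

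There is also an unproved intermediate claim: the ``standard uniform-boundedness argument'' for the form $b$. A unit vector $e$ with $|b(e,e)|$ large is a member of \emph{some} orthonormal basis, but absolute convergence of $\sum_k b(e_k,e_k)$ along each fixed basis does not by itself bound $|b(e,e)|$ uniformly over unit vectors, since the offending vectors need not be simultaneously orthogonal. The paper handles the analogous boundedness question differently and more robustly: it defines a functional $F_a(\psi)=\sum_i a(g_i,g_i)$ on the positive part of the order-unit space $Y=\{\psi\colon -\lambda\varphi\le\psi\le\lambda\varphi\}\cong(L_1(\varphi)^{\text{sa}})^{*}$ (well defined precisely by your hypothesis $\mathrm{(ii)}$ applied to $\psi$ plus a representation of $\varphi-\psi$), proves countable additivity of $F_a$ on decompositions $\psi=\sum_n\psi_n$ in $Y^{+}$, deduces boundedness by the $\sum\psi_n/2^n$ trick, and then shows the resulting element of $Y^{*}\cong(\mathcal{M}^{\text{sa}})^{**}$ is normal, landing in $\mathcal{M}^{\text{sa}}_{*}\cong L_1(\varphi)^{\text{sa}}$ by the known isomorphism $\gamma$. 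That duality argument bypasses both the standard form and the unbounded conjugation entirely; if you want to salvage your approach, you would at minimum need to replace the uniform-boundedness appeal by an additivity argument of this kind and then supply the missing $\|\cdot\|_{\varphi}$-estimates in the last step.
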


\begin{proof}
By virtue of Proposition 1, it suffices to prove
$\mathrm{(ii)} \implies \mathrm{(i)}$. Moreover, it is clear that we
can restrict ourselves to the case when $a$ is hermitian.

So, let $\varphi$ be a faithful normal state on $\mathcal{M}$ and a
hermitian sesquilinear form $a$ on $D_{\varphi}$ satisfy
$\mathrm{(ii)}$.

Denote by $Y$ the Banach space of hermitian $\sigma$-weakly
continuous functionals $\psi$ on $\mathcal{M}$ such that
$-\lambda \varphi \le \psi \le \lambda \varphi$ for some
$\lambda \ge 0$, supported with the norm
$$
\| \psi \|^\varphi = \inf \{ \lambda \ge 0 \colon -\lambda \varphi
\le \psi \le \lambda \varphi \}.
$$
Observe that if $-\lambda \varphi \le \psi \le \lambda \varphi$ then
$0 \le \frac12 (\lambda \varphi - \psi ) \le \lambda \varphi$,
$0 \le \frac12 (\lambda \varphi +\psi ) \le \lambda \varphi$ and
$\psi =\frac12(\lambda\varphi +\psi )-\frac12(\lambda\varphi -\psi)$.
Therefore the space $Y$ is generated by its positive part $Y^+$. One
can verify in a standard way that the restriction operation
$\Psi \mapsto \Psi|_{\mathcal{M}^{\text{sa}}}$ determines an
isometric and order isomorphism between the Banach conjugate space
$(L_1(\varphi)^\text{sa})^*$ and $Y$; and we will identify these spaces.

Associate with the form $a$ the linear functional $F_a$ on $Y$ in
the following way.

a) If $0 \le \psi\le \lambda \varphi$ and
$\psi=\sum\limits_{i \in I} \langle \cdot g_i , g_i \rangle$ then
$g_i\in D_{\varphi}$, and we set
$$
F_a (\psi) \equiv \sum_{i \in I} a(g_i, g_i).
$$
The value $F_a (\psi)$ is defined correctly. Indeed, let
$\psi=\sum\limits_{j \in J} \langle \cdot h_j , h_j \rangle$ be
another representation of $\psi$. Then, assuming that $\lambda =1$
for laying out simplification, we have
$$
\varphi= \sum_{i \in I} \langle \cdot g_i , g_i \rangle
+ \sum_{k \in K} \langle \cdot l_k , l_k \rangle
 = \sum_{j \in J} \langle \cdot h_j , h_j \rangle
+ \sum_{k \in K} \langle \cdot l_k , l_k \rangle
$$
for some $l_k \in H$. Consequently,
$$
\sum_{i \in I} a( g_i , g_i ) + \sum_{k \in K} a(l_k , l_k )
 = \sum_{j \in J} a( h_j , h_j ) + \sum_{k \in K} a( l_k , l_k ),
$$
hence, $\sum\limits_{i \in I} a( g_i , g_i )
= \sum\limits_{j \in J} a( h_j , h_j )$.

b) The functional $F_a$ defined above on $Y^+$ is additive and
positively homogeneous, therefore it can be uniquely extended to the
linear functional on $Y$.

It is easily seen that $F_a$ has the property:
$$
\text{\emph{if\,\,
$\psi, \psi_n \in Y^+$\, and\, $\psi = \sum\limits_{n=1}^{\infty} \psi_n$\,
then\, $F_a(\psi) = \sum\limits_{n=1}^{\infty} F_a (\psi_n)$.}} \eqno(3)
$$
 It follows,
in  particular, that $F_a$ is bounded. Indeed, it suffices to prove
that
$$
\sup \{ |F_a(\psi)| \colon 0 \le \psi \le \varphi \} < \infty .
$$
If the latter were false, there would exist a sequence $(\psi_n)$
such that $0 \le \psi_n \le \varphi$ and $|F_a (\psi_n)| \ge 2^n$.
Consider $\psi = \sum\limits_{n=1}^{\infty} {\displaystyle \frac
{\psi_n}{2^n}}$. Then $0 \le \psi \le \varphi$, while the series
$\sum\limits_{n=1}^{\infty} {\displaystyle F_a \bigl( \frac
{\psi_n}{2^n}\bigr)}$ does not converge, a contradiction.

Thus, $F_a \in Y^*$.

Now, consider the mapping $\gamma$ which is the isometric and order
isomorphism of $L_1(\varphi)^{\text{sa}}$ onto
$\mathcal{M}^{\text{sa}}_*$ (see \cite[Theorem 17.1 and Theorem
17.6]{She1}, \cite[Theorem 2]{TruShe}). Then $\gamma^{*}$ is the
isometric and order isomorphism of
$(\mathcal{M}^{\text{sa}}_*)^* = \mathcal{M}^{\text{sa}}$ onto
$(L_1(\varphi)^{\text{sa}})^* = Y$ and $\gamma^{**}$ is the
isometric and order isomorphism of $Y^*$ onto
$(\mathcal{M}^{\text{sa}})^*$.

Let us show that the functional $\gamma^{**}(F_a)$ on
$\mathcal{M}^{\text{sa}}$ is $\sigma$-weakly continuous.
Take $x_n$, $x$ in $\mathcal{M}^+$ such that
$x = \sum\limits_{n=1}^\infty x_n$ in the sense of $\sigma$-weak
topology on $\mathcal{M}^{\text{sa}}$, that is equivalent to
$x = \sup\limits_k \sum\limits_{n=1}^k x_n$. Then
$\gamma^* (x) = \sum\limits_{n=1}^\infty \gamma^* (x_n )$ and we
have by (3):
$$
\gamma^{**}(F_a)(x) = F_a (\gamma^* (x))
= \sum\limits_{n=1}^\infty F_a (\gamma^* (x_n ))
= \sum\limits_{n=1}^\infty \gamma^{**}(F_a)(x_n) .
$$
It follows (cf. \cite[Corollary III.3.11]{Ta1}) that
$\gamma^{**}(F_a)$ is $\sigma$-weakly continuous, i.\,e. belongs to
$\mathcal{M}^{\text{sa}}_*$. Therefore we can consider the
integrable sesquilinear form $\gamma^{-1}(\gamma^{**}(F_a))$ which
coincides with $a$ by uniqueness arguments.
\end{proof}

\begin{remark}
In the general case of infinite weight the validity of the
implication $\mathrm{(ii)} \implies \mathrm{(i)}$ question remains
open. However, it follows from results of \cite{DoShe} that the
implication holds in the special case of standard trace on the
algebra $\mathcal{B}(H)$ of all bounded operators on a Hilbert space
$H$ (see also \cite[Theorem 5.2]{She1}).
\end{remark}

\end{document}